\theoremstyle{plain}
\newtheorem{thm}{Theorem}
\newtheorem{cor}[thm]{Corollary}
\newtheorem{lem}[thm]{Lemma}
\newtheorem{prop}[thm]{Proposition}
\newcommand{\ep}{\varepsilon}
\newcommand{\al}{\alpha}
\newcommand{\bs}{\backslash}
\newcommand{\ol}{\overline}
\newcommand{\diam}{\operatorname{diam}}
\newcommand{\Fac}{\operatorname{Fac}}
\newcommand{\N}{{\mathbb N}}
\newcommand{\E}{{\mathbb E}}
\newcommand{\G}{{\mathcal G}}
\begin{document}

\title{Ideals of operators on $(\oplus \ell^\infty(n))_{\ell^1}$}

\begin{abstract}
The unique maximal ideal in the Banach algebra $L(E)$, $E = (\oplus \ell^\infty(n))_{\ell^1}$, is identified.  The proof relies on techniques developed by Laustsen, Loy and Read \cite{LLR} and a dichotomy result for operators mapping into $L^1$ due to Laustsen, Odell, Schlumprecht and Zs\'{a}k \cite{LOSZ}.
\end{abstract}

\author{Denny H.\ Leung}
\address{Department of Mathematics, National University of Singapore, Singapore 119076}
\email{matlhh@nus.edu.sg}

\thanks{Research of the author was partially supported by AcRF project no.\ R-146-000-157-112}

\maketitle

Given a Banach space $E$, it is a natural problem to try to understand the ideal structure of the Banach algebra $L(E)$. In the classical period, the only Banach spaces $E$ for which the lattice of closed ideals in $L(E)$ are fully understood are Hilbert space \cite{C, G,L} and the sequence spaces $c_0$ and $\ell^p$, $1\leq p < \infty$ \cite {GMF}.  In the past decade, starting with \cite{LLR}, there has been a resurgence of interest in the problem and new results have been obtained, see, e.g., \cite{KPSTT, KL, LOSZ, LSZ, LinSZ, SST, S}.
One should also mention the recent breakthrough example $AH$ by Argyros and Haydon \cite{AH}, where $L(AH)$ is well understood because of  a very different reason. 
In this note, we make a small contribution to the program by identifying the unique maximal ideal in the algebra $L(E)$, where $E = (\oplus \ell^\infty(n))_{\ell^1}$.
Our method is slightly more general in the sense that it works also for the space $E = (\oplus \ell^2(n))_{\ell^1}$, which gives an alternate proof of the main result of \cite{LSZ}.

Let $E_n$ be finite dimensional Banach spaces and let $E = (\oplus E_n)_{\ell^1}$.  Given a subset $I$ of $\N$, denote by $E_I$ the subspace $(\oplus_{n\in I}E_n)_{\ell^1}$.  The natural embedding of $E_I$ into $E$ is denoted by $J_I$ and the natural projection from $E$ onto $E_I$ is denoted by $P_I$. If $I = \{n\}$, then we write for short $J_n$ and $P_n$ respectively. Let $T$ be a bounded linear operator on $E$, then $T$ has a natural matrix representation $(T_{mn})$, where $T_{mn} = P_mTJ_n: E_n \to E_m$.  Denote the $n$th column in this representation by $T_n$. Thus $T_n = TJ_n : E_n \to E$.  Since $E_n$ is finite dimensional, $\lim_m J_{[m,\infty)}P_{[m,\infty)}T_n = 0$ in the operator norm.

Let $X, Y, Z$ be  Banach spaces. If $T: X\to Y$ is a bounded linear operator and $\ep > 0$, define
\[\Fac^\ep_Z(T) = \inf\{\|R\|\,  \|S\|: R\in L(X,Z), S\in L(Z,Y), \|T - SR\| \leq \ep\}.\]
For $Z = c_0$, respectively $\ell^1$, we write $\Fac^\ep_0(T)$ and $\Fac^\ep_1(T)$, respectively.
Let
\[ \G_Z(X,Y) = \{T\in L(X,Y): T= SR, R\in L(X,Z), S\in L(Z,Y)\}.\]
In the sequel, $T$ will always denote a bounded linear operator on $E$.

\begin{lem}\label{lem1}
Suppose that there exists an embedding $\theta: E \to \ell^\infty$ such that $\sup_n\Fac^\ep_1(\theta T_n) < \infty$.  Then $d(\theta T, \G_{\ell^1}(E,\ell^\infty))\leq \ep$, where $d$ is the distance in the operator norm.
\end{lem}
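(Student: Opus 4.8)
The plan is to glue the columnwise factorizations provided by the hypothesis into a single factorization of $\theta T$ through $\ell^1$, exploiting the fact that the domain $E$ is an $\ell^1$-sum. Set $M = \sup_n \Fac^\ep_1(\theta T_n) < \infty$. For each $n$, the definition of $\Fac^\ep_1$ supplies operators $R_n \in L(E_n, \ell^1)$ and $S_n \in L(\ell^1, \ell^\infty)$ with $\|\theta T_n - S_n R_n\| \leq \ep$ and $\|R_n\|\,\|S_n\| \leq M + 1$. The crucial first move is to convert this product bound into a uniform bound on each factor: if $R_n$ and $S_n$ are both nonzero, replace the pair $(R_n, S_n)$ by $(\lambda_n R_n, \lambda_n^{-1} S_n)$ with $\lambda_n = (\|S_n\|/\|R_n\|)^{1/2}$, so that $\|R_n\| = \|S_n\| = (\|R_n\|\,\|S_n\|)^{1/2} \leq (M+1)^{1/2}$; in the degenerate case $S_n R_n = 0$ one has $\|\theta T_n\| \leq \ep$ and may simply take $R_n = S_n = 0$. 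Thus we may assume $\sup_n \|R_n\| \leq (M+1)^{1/2}$ and $\sup_n \|S_n\| \leq (M+1)^{1/2}$.

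Next I would assemble the block operators. Define $R \colon E \to (\oplus \ell^1)_{\ell^1}$ by $Rx = (R_n P_n x)_n$ and $S \colon (\oplus \ell^1)_{\ell^1} \to \ell^\infty$ by $S\big((y_n)_n\big) = \sum_n S_n y_n$. Writing $x_n = P_n x$, so that $\sum_n \|x_n\| = \|x\|$, the uniform factor bounds give $\sum_n \|R_n x_n\|_1 \leq (M+1)^{1/2}\|x\|$, so $Rx$ genuinely lies in the $\ell^1$-sum and $\|R\| \leq (M+1)^{1/2}$; likewise $\sum_n \|S_n y_n\|_\infty \leq (M+1)^{1/2}\sum_n \|y_n\|_1$, so the series defining $S$ converges absolutely in $\ell^\infty$ and $\|S\| \leq (M+1)^{1/2}$. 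Since a countable $\ell^1$-sum of copies of $\ell^1$ is isometrically $\ell^1$, the composite $SR$ belongs to $\G_{\ell^1}(E,\ell^\infty)$.

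It remains to estimate the error. For $x = (x_n)_n \in E$, continuity and linearity of $T$, $\theta$ and $S$ yield $\theta T x = \sum_n \theta T_n x_n$ and $SRx = \sum_n S_n R_n x_n$, both series converging in $\ell^\infty$. Hence
\[
  \|(\theta T - SR)x\|_\infty
  = \Big\| \sum_n (\theta T_n - S_n R_n) x_n \Big\|_\infty
  \leq \sum_n \|\theta T_n - S_n R_n\|\,\|x_n\|
  \leq \ep \sum_n \|x_n\| = \ep\|x\|.
\]
Therefore $\|\theta T - SR\| \leq \ep$ with $SR \in \G_{\ell^1}(E,\ell^\infty)$, which is exactly the assertion $d(\theta T, \G_{\ell^1}(E,\ell^\infty)) \leq \ep$.

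The argument is a soft gluing, so there is no deep obstacle internal to the lemma; the one point requiring care is that the block maps really do assemble into bounded operators with the stated domain and range --- that $Rx$ lands in the $\ell^1$-sum and that the series for $S$ converges. Both hinge on the $\ell^1$-sum structure of $E$ (which makes the weights $\|x_n\|$ summable) together with the uniform factor bounds extracted at the outset; everything else is the triangle inequality. The genuinely hard work lies outside this lemma, in verifying the hypothesis $\sup_n \Fac^\ep_1(\theta T_n) < \infty$.
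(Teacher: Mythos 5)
Your proof is correct and follows essentially the same route as the paper: extract columnwise factorizations with uniform factor norms (the paper normalizes to $\|R_n\|\leq 1$, $\|S_n\|\leq C$ rather than balancing the two factors, but this is immaterial), assemble the block operators $R$ and $S$ through $(\oplus\ell^1)_{\ell^1}\cong\ell^1$, and estimate the error using the $\ell^1$-sum structure of $E$. The details you spell out are exactly the ones the paper leaves as ``easy to check.''
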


\begin{proof}
There are a finite constant $C > 0$ and operators $R_n : E_n\to \ell^1$, $S_n: \ell^1 \to \ell^\infty$ such that $\|\theta T_n - S_nR_n\|\leq \ep$, $\|R_n\| \leq 1$, $\|S_n\| \leq C$ for all $n$.  For $x = (x_n) \in E$, let $Rx = (R_nx_n) \in (\oplus \ell^1)_{\ell^1}$. Define $S: (\oplus \ell^1)_{\ell^1} \to \ell^\infty$ by $Sy = \sum S_ny_n$, where $y = (y_n)$. It is easy to check that $\|R\| \leq 1$, $\|S\| \leq C$, and that $\|\theta T - SR\| \leq \ep$. Since $(\oplus \ell^1)_{\ell^1}$ is isometric to $\ell^1$, $SR \in \G_{\ell^1}(E,\ell^\infty)$.
\end{proof}

\begin{lem}\label{lem2}
Suppose that $W = \sum^3_{i=1}W_i$, where $W, W_i$ belong to $L(X,Y)$. Then, for any $\ep > 0$, $\Fac^{3\ep}_1(W) \leq 3\max_{1\leq i \leq 3}\Fac^\ep_1(W_i)$.
\end{lem}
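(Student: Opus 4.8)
The plan is to imitate the proof of Lemma~\ref{lem1}: I combine near-optimal factorizations of the three summands $W_i$ into a single factorization of $W$, using that the $\ell^1$-direct sum $(\ell^1\oplus\ell^1\oplus\ell^1)_{\ell^1}$ is isometric to $\ell^1$. Fix $\delta>0$ and set $M=\max_{1\le i\le 3}\Fac^\ep_1(W_i)$. For each $i$ I choose $R_i\in L(X,\ell^1)$ and $S_i\in L(\ell^1,Y)$ with $\|W_i-S_iR_i\|\le\ep$ and $\|R_i\|\,\|S_i\|\le\Fac^\ep_1(W_i)+\delta$.

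The next step is a rescaling to control the two norms separately. Replacing $R_i$ by $R_i/\|R_i\|$ and $S_i$ by $\|R_i\|S_i$ when $R_i\ne 0$ (and keeping $R_i=S_i=0$ otherwise, in which case $\|W_i\|\le\ep$ already), I may assume that $\|R_i\|\le 1$ and $\|S_i\|\le\Fac^\ep_1(W_i)+\delta\le M+\delta$ for every $i$, without altering the product $S_iR_i$. I then assemble the block operators $R\colon X\to(\ell^1\oplus\ell^1\oplus\ell^1)_{\ell^1}$ by $Rx=(R_1x,R_2x,R_3x)$ and $S\colon(\ell^1\oplus\ell^1\oplus\ell^1)_{\ell^1}\to Y$ by $S(y_1,y_2,y_3)=\sum_{i=1}^3 S_iy_i$.

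The estimates are now immediate from the $\ell^1$-sum norm: $\|R\|\le\sum_i\|R_i\|\le 3$ and $\|S\|\le\max_i\|S_i\|\le M+\delta$, while $SR=\sum_i S_iR_i$, so the three-term triangle inequality gives $\|W-SR\|\le\sum_i\|W_i-S_iR_i\|\le 3\ep$. Since $(\ell^1\oplus\ell^1\oplus\ell^1)_{\ell^1}$ is isometric to $\ell^1$, composing $R$ and $S$ with this isometry produces operators factoring $SR$ through $\ell^1$ with the same norms, so $\Fac^{3\ep}_1(W)\le\|R\|\,\|S\|\le 3(M+\delta)$. Letting $\delta\to 0$ yields $\Fac^{3\ep}_1(W)\le 3M$, which is the claim.

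I expect no serious obstacle here; the only points needing care are the rescaling, which is exactly what pins the leading constant to $3$ (one factor of $3$ coming from $\|R\|$ and the error $3\ep$ from summing three errors), and the bookkeeping for a possibly vanishing $R_i$. Everything else is the same device as in Lemma~\ref{lem1}.
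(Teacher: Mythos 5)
Your proof is correct and follows essentially the same route as the paper's: near-optimal factorizations of the three summands are normalized so that $\|R_i\|\le 1$ and $\|S_i\|\le\Fac^\ep_1(W_i)+\delta$, then assembled into block operators through $\ell^1\oplus_1\ell^1\oplus_1\ell^1\cong\ell^1$, giving $\|R\|\le 3$, $\|S\|\le\max_i\Fac^\ep_1(W_i)+\delta$ and error $3\ep$. The only difference is that you make the rescaling step explicit where the paper simply asserts the normalized factorizations exist.
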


\begin{proof}
For $1\leq i\leq 3$ and any $\delta > 0$, there exist $R_i : X\to \ell^1$ and $S_i: \ell^1 \to Y$ with $\|R_i\| \leq 1, \|S_i\| \leq \Fac^\ep_1(W_i) + \delta$ and $\|W_i - S_iR_i\| \leq \ep$.
Define $R: X \to \ell^1\oplus_1\ell^1\oplus_1\ell^1$ and $S: \ell^1\oplus_1\ell^1\oplus_1\ell^1 \to Y$ by $Rx = (R_1x, R_2x, R_3x)$ and $S(z_1,z_2,z_3) = \sum^3_{i=1}S_iz_i$. Then $\|R\|\leq 3$, $\|S\| \leq \max_{1\leq i \leq 3}\Fac^\ep_1(W_i) + \delta$, and $\|W - SR\| \leq 3\ep$. Since $\ell^1\oplus_1\ell^1\oplus_1\ell^1$ is isometric to $\ell^1$, the proof is complete.
\end{proof}

\begin{prop}
Let $X$ be a  Banach space. Take $\G^i_{\ell^1}(X)$ to be the set of operators $T$ in $L(X)$ such that there exists an embedding $\theta: X \to \ell^\infty$ with $\theta T \in \ol{\G_{\ell^1}(X,\ell^\infty)}$. Then $\G^i_{\ell^1}(X)$ is a closed two-sided ideal in $L(X)$.  It is proper provided that $X$ contains uniformly isomorphic copies of $\ell^\infty(k)$. 
\end{prop}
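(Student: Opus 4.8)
The plan is to reduce everything to one structural property, namely that $\ell^\infty$ (and each $\ell^\infty(k)$) is $1$-injective. I would first observe that membership in $\G^i_{\ell^1}(X)$ is independent of the embedding used. If $\theta_1,\theta_2:X\to\ell^\infty$ are embeddings, then $\theta_2\theta_1^{-1}:\theta_1(X)\to\ell^\infty$ extends, by injectivity of $\ell^\infty$, to $U:\ell^\infty\to\ell^\infty$ with $U\theta_1=\theta_2$. Since $\G_{\ell^1}(X,\ell^\infty)$ is stable under left composition with operators on $\ell^\infty$ (if $B=SR$ then $UB=(US)R$) and left composition is norm continuous, $\theta_1T\in\ol{\G_{\ell^1}(X,\ell^\infty)}$ iff $\theta_2T\in\ol{\G_{\ell^1}(X,\ell^\infty)}$. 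So I may fix a single embedding $\theta$ (one exists whenever $X$ embeds in $\ell^\infty$, e.g.\ when $X$ is separable, which covers $E$) and use $\G^i_{\ell^1}(X)=\{T:\theta T\in\ol{\G_{\ell^1}(X,\ell^\infty)}\}$ throughout.

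Granting this, the algebraic and topological assertions are soft. Exactly as in Lemma~\ref{lem2}, the isometry $\ell^1\oplus_1\ell^1\cong\ell^1$ shows that $\G_{\ell^1}(X,\ell^\infty)$ is a linear subspace, hence its closure is a closed subspace. Since $T\mapsto\theta T$ is continuous and linear from $L(X)$ into $L(X,\ell^\infty)$, the preimage $\G^i_{\ell^1}(X)$ of this closed subspace is a closed subspace of $L(X)$. For the right ideal property I use $\theta(TA)=(\theta T)A$ and the fact that right composition with $A$ preserves $\G_{\ell^1}(X,\ell^\infty)$ (since $B=SR$ gives $BA=S(RA)$) and its closure. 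For the left ideal property injectivity re-enters: extend $\theta A\theta^{-1}:\theta(X)\to\ell^\infty$ to $\ti A:\ell^\infty\to\ell^\infty$ so that $\ti A\theta=\theta A$, whence $\theta(AT)=\ti A(\theta T)\in\ol{\G_{\ell^1}(X,\ell^\infty)}$.

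The real content is properness. Suppose, for contradiction, that $I_X\in\G^i_{\ell^1}(X)$, i.e.\ $\theta\in\ol{\G_{\ell^1}(X,\ell^\infty)}$; fix a small $\ep>0$ and write $\theta=SR+U$ with $\|R\|\le1$, $\|S\|\le M$, $\|U\|\le\ep$. Let $i_k:\ell^\infty(k)\to X$ be the uniform embeddings, $\|i_k\|,\|i_k^{-1}\|\le\Lambda$, and use injectivity of $\ell^\infty(k)$ to extend $(\theta i_k)^{-1}$ to $\Phi_k:\ell^\infty\to\ell^\infty(k)$ with $\Phi_k\theta i_k=I_{\ell^\infty(k)}$ and $\|\Phi_k\|\le\Lambda\|\theta^{-1}\|$. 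Then
\[
 I_{\ell^\infty(k)}=(\Phi_kS)(Ri_k)+E_k,\qquad E_k=\Phi_kUi_k,\ \ \|E_k\|\le\Lambda^2\|\theta^{-1}\|\,\ep .
\]
Choosing $\ep$ so that $\|E_k\|\le\tfrac12$, the operator $I_{\ell^\infty(k)}-E_k=(\Phi_kS)(Ri_k)$ is invertible with inverse of norm $\le2$, so $I_{\ell^\infty(k)}=\big((I_{\ell^\infty(k)}-E_k)^{-1}\Phi_kS\big)(Ri_k)$ factors through $\ell^1$. I would now run the $2$-summing norm $\pi_2$ against this: Grothendieck's theorem bounds $\pi_2(Ri_k)\le K_G\|Ri_k\|\le K_G\Lambda$, so by the ideal property
\[
 \pi_2(I_{\ell^\infty(k)})\le\big\|(I_{\ell^\infty(k)}-E_k)^{-1}\Phi_kS\big\|\,\pi_2(Ri_k)\le 2\Lambda\|\theta^{-1}\|M\cdot K_G\Lambda .
\]
On the other hand, testing the definition of $\pi_2$ on the unit vector basis of $\ell^\infty(k)$ (whose weak $2$-norm is $1$, since $(\ell^\infty(k))^*=\ell^1(k)$ and $\|\cdot\|_2\le\|\cdot\|_1$) gives $\pi_2(I_{\ell^\infty(k)})\ge\sqrt{k}$. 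Thus $\sqrt{k}\le2K_GM\Lambda^2\|\theta^{-1}\|$ for every $k$, which is absurd; hence $I_X\notin\G^i_{\ell^1}(X)$.

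I expect no difficulty in the subspace, closedness, and two ideal properties: each is a direct consequence of the $1$-injectivity of $\ell^\infty$ and the elementary stability of $\G_{\ell^1}$ under composition. The main obstacle is properness, where the only genuine analysis lies: one must quantify that an isomorphic embedding cannot be approximated by $\ell^1$-factorable operators on uniform $\ell^\infty(k)$-copies. The mechanism is the clash between the divergence $\pi_2(I_{\ell^\infty(k)})\ge\sqrt{k}$ and Grothendieck's theorem, which caps $\pi_2$ of any operator $\ell^\infty(k)\to\ell^1$ by a fixed multiple of its norm; the care needed is to absorb the approximation error $U$ uniformly in $k$, which the perturbation and inversion step above accomplishes.
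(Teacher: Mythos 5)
Your proof is correct and follows the same route as the paper: the injectivity of $\ell^\infty$ drives the left-ideal property (and, in your version, also the independence of the embedding needed for additivity), and properness is reduced to the impossibility of uniformly factoring $1_{\ell^\infty(k)}$ through $\ell^1$. The paper merely asserts that impossibility and calls the remaining verifications straightforward; your perturbation step absorbing the approximation error and the Grothendieck/$\pi_2$ computation ($\pi_2(1_{\ell^\infty(k)})\ge\sqrt{k}$ versus the uniform upper bound) supply exactly the details it omits.
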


\begin{proof}
Suppose that $\theta$ and $T$ are as stated in the proposition. By the extension property of operators into $\ell^\infty$, for any bounded linear operator $W:X\to X$, there exists a bounded linear operator $V: \ell^\infty\to \ell^\infty$ such that $V\theta = \theta W$.  Thus $\theta WT = V\theta T \in  \ol{\G_{\ell^1}(X,\ell^\infty)}$.  The verification of the remaining conditions for $\G^i_{\ell^1}(X)$ to be a two-sided closed ideal in $L(X)$ is straightforward.  If the identity map on $X$ belongs to  $\G^i_{\ell^1}(X)$ and $X$ contains uniformly isomorphic copies of $\ell^\infty(k)$, then the identity operators $1_k: \ell^\infty(k)\to \ell^\infty(k)$ uniformly factor through $\ell^1$, which is impossible.
\end{proof}

Fix a surjective map $Q: \ell^1\to (\oplus E_n')_{c_0}$ with $\|Q\| \leq 1$.  Then $\theta = Q' : E \to \ell^\infty$ is an embedding such that $\|\theta\|\leq 1$. If $I_1$ and $I_2$ are subsets of $\N$, we write $I_1 < I_2$ to mean that $\max I_1 < \min I_2$.

\begin{prop}\label{prop3}
Let $T$ be an operator in $L(E)\bs \G^i_{\ell^1}(E)$. There exist $\ep > 0$, $n_1 < n_2 < \cdots$, subsets $I_1 < I_2 < \cdots$ of $\N$, such that, taking $W_j = P_{I_j}TJ_{n_j}$, we have $\sup_j\Fac^\ep_1(\theta J_{I_j}W_j) = \infty$.
\end{prop}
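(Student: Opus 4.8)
The starting point is to translate $T\notin\G^i_{\ell^1}(E)$ into a statement about the individual columns via Lemma~\ref{lem1}. Since membership in $\G^i_{\ell^1}(E)$ only asks that \emph{some} embedding work, $T\notin\G^i_{\ell^1}(E)$ means in particular that the fixed embedding $\theta=Q'$ fails, i.e.\ $\theta T\notin\ol{\G_{\ell^1}(E,\ell^\infty)}$. Hence $d(\theta T,\G_{\ell^1}(E,\ell^\infty))>\eta$ for some $\eta>0$ (distance to a set equals distance to its closure). The contrapositive of Lemma~\ref{lem1} then gives $\sup_n\Fac^\eta_1(\theta T_n)=\infty$. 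I would set $\ep=\eta/3$, so that $\sup_n\Fac^{3\ep}_1(\theta T_n)=\infty$; this is precisely the quantity appearing on the left of Lemma~\ref{lem2}.

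The blocks $I_j$ and columns $n_j$ will be produced by induction, the engine being a three-term splitting of a column together with Lemma~\ref{lem2}. Suppose $n_1<\dots<n_{j-1}$ and $I_1<\dots<I_{j-1}$ have been chosen, and put $M_0=\max I_{j-1}+1$ (with $M_0=1$ when $j=1$). For a column index $n$ write
\[\theta T_n=\theta J_{[1,M_0)}P_{[1,M_0)}T_n+\theta J_{[M_0,M)}P_{[M_0,M)}T_n+\theta J_{[M,\infty)}P_{[M,\infty)}T_n,\]
calling the three summands $A$, $B$, $C$. The key point is that the head $A$ factors through the \emph{fixed} finite-dimensional space $E_{[1,M_0)}$: the finite-rank operator $\theta J_{[1,M_0)}$ factors through $\ell^1$ with some finite constant $\kappa=\Fac^0_1(\theta J_{[1,M_0)})$, and composing on the right with $P_{[1,M_0)}T_n$ (of norm $\le\|T\|$) gives $\Fac^\ep_1(A)\le\Fac^0_1(A)\le\kappa\|T\|$ \emph{uniformly in $n$}. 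For the tail, the observation $\lim_m J_{[m,\infty)}P_{[m,\infty)}T_n=0$ lets me, once $n$ is fixed, pick $M$ so large that $\|C\|\le\ep$, whence $\Fac^\ep_1(C)=0$.

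With these bounds in hand I would choose the column. Because each $T_n=TJ_n$ has finite rank, each $\Fac^{3\ep}_1(\theta T_n)$ is finite, so an infinite supremum forces infinitely many indices $n$ with $\Fac^{3\ep}_1(\theta T_n)$ above any prescribed threshold; in particular I can pick $n_j>n_{j-1}$ with $\Fac^{3\ep}_1(\theta T_{n_j})>\max(3\kappa\|T\|,3j)$. Fixing this $n_j$, choose $M>M_0$ as above and set $I_j=[M_0,M)$, so that $\theta J_{I_j}W_j=B$ with $W_j=P_{I_j}TJ_{n_j}$. Lemma~\ref{lem2} yields $\Fac^{3\ep}_1(\theta T_{n_j})\le 3\max(\Fac^\ep_1(A),\Fac^\ep_1(B),\Fac^\ep_1(C))$; since $\Fac^\ep_1(A)\le\kappa\|T\|$ and $\Fac^\ep_1(C)=0$ are both strictly below $\tfrac13\Fac^{3\ep}_1(\theta T_{n_j})$, the maximum must be attained at $B$, giving $\Fac^\ep_1(\theta J_{I_j}W_j)\ge\tfrac13\Fac^{3\ep}_1(\theta T_{n_j})>j$. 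Letting $j\to\infty$ produces $\sup_j\Fac^\ep_1(\theta J_{I_j}W_j)=\infty$, as required.

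I expect the main obstacle to be isolating the correct three-way decomposition — in particular recognizing that it is the \emph{head}, not the tail, that must be controlled uniformly in $n$, which works precisely because $M_0$ is frozen \emph{before} $n_j$ is chosen, so $A$ factors through a fixed finite-dimensional space. The rest is organizational bookkeeping: keeping the $n_j$ strictly increasing and the blocks $I_j$ strictly increasing and disjoint, guaranteed by the ``infinitely many large columns'' remark and by starting each new block at $M_0=\max I_{j-1}+1$.
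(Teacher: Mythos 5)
Your proof is correct and follows essentially the same route as the paper: the same head/middle/tail splitting of each column, with the head controlled uniformly because it factors through the fixed finite-dimensional space $E_{[1,M_0)}$, combined with the contrapositive of Lemma~\ref{lem1} and the three-term estimate of Lemma~\ref{lem2}. The differences are cosmetic (you bound the head via $\Fac^0_1(\theta J_{[1,M_0)})$ where the paper invokes the common finite-dimensional range of the $P_{[1,r]}T_n$, and you spell out why a column index $n_j>n_{j-1}$ with large $\Fac^{3\ep}_1(\theta T_{n_j})$ can always be found).
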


\begin{proof}
There exists  $\ep > 0$ so that $d(\theta T, \G_{\ell^1}(E,\ell^\infty)) > 3\ep$. By Lemma \ref{lem1}, $\sup_n\Fac^{3\ep}_1(\theta T_n) = \infty$, where $T_n = TJ_n$.
Let $n_0 = 1$ and $I_0 = \emptyset$. Suppose that $n_0 < \cdots < n_{j-1}$ and $I_0 < \cdots < I_{j-1}$ have been chosen.
Let $r = \max I_{j-1}$.  Then the maps $(P_{[1,r]}T_n)^\infty_{n=1}$ are uniformly bounded and have common finite dimensional range $E_{[1,r]}$.  Therefore, $\sup_n\Fac^\ep_1(P_{[1,r]}T_n) < \infty$. Hence $C = \sup_n\Fac^\ep_1(\theta J_{[1,r]}P_{[1,r]}T_n) < \infty$.  Pick $n_j > n_{j-1}$ so that $\Fac^{3\ep}_1(\theta T_{n_j}) > 3(C+j)$. There exists $s > r$ such that $\|J_{[s,\infty)}P_{[s,\infty)}T_{n_j}\| < \ep$. Then $\Fac^\ep_1(\theta J_{[s,\infty)}P_{[s,\infty)}T_{n_j}) = 0$. Since \[ \theta T_{n_j}  = \theta J_{[1,r]}P_{[1,r]}T_{n_j} + \theta J_{(r,s)}P_{(r,s)}T_{n_j} + \theta J_{[s,\infty)}P_{[s,\infty)}T_{n_j},\]
it follows from Lemma \ref{lem2} that
\[ 3(C+j) < \Fac^{3\ep}_1(\theta T_{n_j}) \leq 3(C \vee \Fac^\ep_1(\theta J_{(r,s)}P_{(r,s)}T_{n_j})).\]
Let $I_j = (r,s)$.  Then $I_j > I_{j-1}$ and $\Fac^\ep_1(\theta J_{I_j}W_{j}) > j$.
\end{proof}

Assume that 
\begin{enumerate}
\item[($*$)]
$(\dim E_n)$ is unbounded and that there is a finite constant $C$ such that for any $m < n$, there are bounded linear operators $i_{mn}: E_m\to E_n$ and $q_{mn}: E_n \to E_m$ so that $q_{mn}i_{mn} = 1_{E_m}$, $\|i_{mn}\| \leq 1$, and $\|q_{mn}\| \leq C$.
\end{enumerate}
Such is the case, in particular, if $E_n = \ell^\infty(n)$ for all $n$.
Given two sequences $(U_n)$ and $(V_n)$ of bounded linear operators $U_n : X_n\to Y_n$, $V_n: G_n \to H_n$, we say that $U_n$ {\em factor through $V_n$ uniformly} if there exists $K < \infty$ such that for any $n$, there exist ${k_n}$ and $R_n \in L(H_{k_n},Y_n)$, $S_n \in L(X_n, G_{k_n})$ satisfying $U_n = R_nV_{k_n}S_n$ and $\|R_n\|\,\|S_n\| \leq K$.

\begin{thm}\label{thm 3.1}
Suppose that $T \in L(E)$ and condition ($*$) holds. Let $I_1 < I_2 < \cdots$, $N_1 < N_2 < \cdots$ be finite subsets of $\N$ and set $W_j = P_{I_j}TJ_{N_j}$, $j \in \N$.  Assume that $1_{E_n}$ factor through $W_j$ uniformly.  Then $1_E$ factors through $T$.
\end{thm}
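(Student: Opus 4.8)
The plan is to produce operators $A,B\in L(E)$ with $1_E=ATB$. By condition ($*$) every $E_m$ embeds, with uniformly bounded constants, into a high-dimensional block, so the strategy is to spread the given uniform factorizations of the $1_{E_n}$ across pairwise disjoint blocks of $E$ and then correct the off-diagonal interaction.

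First I would pass to a subsequence of the pairs. For a fixed $j$, a factorization $1_{E_n}=R_nW_jS_n$ forces $\dim E_n$ to be at most the (finite) rank of $W_j$, so only finitely many $n$ can use a given $j$; since $(\dim E_n)$ is unbounded and $1_{E_n}$ factor through $W_j$ uniformly, the indices $j$ that occur tend to infinity, and I may select pairs along which the blocks $I_j$ and $N_j$ are strictly increasing, hence pairwise disjoint. Relabelling and invoking ($*$), I obtain for each $m\in\N$ a (disjoint) output block $A_m$, input block $B_m$, a target $E_{\nu_m}$ with $\nu_m>m$, operators $R^{(m)},S^{(m)}$ from the uniform factorization, and the embedding/retraction $i_m,q_m$ of $E_m$ into $E_{\nu_m}$ supplied by ($*$). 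Setting $\beta_m=J_{B_m}S^{(m)}i_m:E_m\to E$ and $\al_m=q_mR^{(m)}P_{A_m}:E\to E_m$, this gives $\al_mT\beta_m=1_{E_m}$ with $\|\al_m\|,\|\beta_m\|$ bounded by a single constant $\kappa$.

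Define $B=\sum_m\beta_mP_m$ and $A=\sum_mJ_m\al_m$. Since the $B_m$ are disjoint and each $\beta_m$ is bounded below, $B$ is an isomorphic embedding; since the $A_m$ are disjoint, $A$ is bounded. A direct computation gives $ATB=1_E+D$, where $D$ collects the off-diagonal terms $\al_mT\beta_{m'}=q_mR^{(m)}(P_{A_m}TJ_{B_{m'}})S^{(m')}i_{m'}$, $m\neq m'$, and the $\ell^1$-sum structure yields $\|D\|\le\kappa^2\sup_{m'}\sum_{m\neq m'}\|P_{A_m}TJ_{B_{m'}}\|$. The reason for passing through finite-dimensional blocks is precisely that, for each fixed $m'$, the column $TJ_{B_{m'}}$ has finite-dimensional domain, so $\sum_n\|P_nTJ_{B_{m'}}\|<\infty$ and $TJ_{B_{m'}}$ has a finite essential support $G_{m'}$ outside which this sum is arbitrarily small. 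Hence the output blocks $A_m$ avoiding $G_{m'}$ contribute a small amount, and for $m>m'$ I can force $A_m\cap G_{m'}=\emptyset$ by taking the later blocks far enough out, as the $A_m$ increase to infinity.

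The only substantial difficulty lies with the terms $m<m'$: an early output block $A_m$ may meet $G_{m'}$, i.e.\ a far-out input $B_{m'}$ may send mass of $TJ_{B_{m'}}$ back onto a fixed early block $A_m$. This ``persistent head'' is not governed by the column-tail property, since $E^*=(\oplus E_n^*)_{\ell^\infty}$ and functionals need not vanish at infinity, so no subsequence of the inputs need make $\|P_{A_m}TJ_{B_{m'}}\|$ small for a fixed early $A_m$. I would resolve this by a stabilization argument: because $P_{A_m}T$ has finite-dimensional range, along a further subsequence of the inputs the vectors $P_{A_m}TJ_{B_{m'}}(\cdot)$ converge, so that after replacing inputs by successive differences (equivalently, after a finite-rank correction into $E_{A_m}$) the head into each fixed early block is reduced to finite rank and then made small; carrying this out simultaneously over the growing family of early blocks by a diagonal extraction yields a subsequence with $\|D\|<1$. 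Then $ATB=1_E+D$ is invertible and $1_E=(1_E+D)^{-1}ATB$ factors through $T$, any residual finite-rank defect being absorbed via $E\cong E\oplus E$ together with the fact that $T$ fixes copies of $E_{\nu_m}$ of arbitrarily large dimension. I expect the bookkeeping in this last step to be the principal technical obstacle.
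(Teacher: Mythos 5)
Your proposal follows essentially the same route as the paper's proof: a diagonal embedding of the $E_m$ into disjoint blocks via ($*$) and the uniform factorizations, the column-tail estimate $\lim_{s}\|P_{I_s}TJ_{N_r}\|=0$ for the late-output/early-input terms, and — for the genuinely hard early-output/late-input terms — exactly the paper's device of stabilizing the finite-rank heads by compactness plus a diagonal extraction and then cancelling them with successive differences of paired input blocks (the paper's indices $m_{2r-1},m_{2r}$ both carrying $E_r$), followed by a Neumann-series inversion. Carried out as you describe, the successive-difference cancellation already yields $\|D\|<1$ with no residual finite-rank defect, so the $E\cong E\oplus E$ fallback you worry about at the end is not needed.
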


\begin{proof}
Note that if a subsequence of $1_{E_n}$ factor through $W_j$ uniformly, then so do $1_{E_n}$ by property ($*$). Replace $(W_j)$ by a subsequence if necessary to assume that there is a finite constant $K$ and operators $R_j \in L(E_{I_j},E_j)$, $S_j \in L(E_j, E_{N_j})$ such that $\|R_j\|\leq 1$, $\|S_j\| \leq K$ and $1_{E_j}= R_jW_jS_j$ for all $j$.
Choose $\ep > 0$ so that 
\begin{equation}\label{eq1}
C\bigl\{K\bigl(\sum_{r<s}\frac{\ep}{2^{2r+2s}} + \sum_{2\leq r<s}\frac{\ep}{2^{2r+2s-1}}+\sum^\infty_{r=1}\frac{\ep}{2^{4r-1}}\bigr) + \sum^\infty_{s=1}\frac{\ep}{2^{2s+1}}\bigr\} < 1.
\end{equation}
For each $r$, $\lim_s\|P_{I_s}TJ_{N_r}\| = 0$.  Hence, by using a further subsequence of $(W_j)$,  we may assume that 
\begin{equation}\label{eq1.1}
\|P_{I_s}TJ_{N_r}\| \leq \frac{\ep}{2^{r+s}}\quad \text{if $s > r \geq 1$}.
\end{equation}
For any $p\in \N$, denote by $V_p$ the operator $P_{I_p}TJ_{\cup^\infty_{r=p+1}N_r}: E_{\cup^\infty_{r=p+1}N_r} \to E_{I_p}$.  
Fix $k$ and $p$. If $j > p$, let $e_{jp} = P_{\cup^\infty_{r=p+1}N_r}J_{N_j}$ be the natural embedding of $E_{N_j}$ into $E_{\cup^\infty_{r=p+1}N_r}$.  The sequence 
$V_pe_{jp}S_ji_{kj}$, $j > \max\{k,p\}$, is a uniformly bounded sequence of operators between the finite dimensional spaces $E_k$ and $E_{I_p}$ and hence is compact in the operator norm.  Thus, there are an infinite subsequence $(m_p)$ of $\N$  and a decreasing sequence $(M_p)$ of infinite subsets of $\N$ such that $m_p\in M_p$ and that
\begin{equation}\label{eq2}
\diam\{V_{m_p}e_{jm_p}S_{j}i_{rj}:j\in M_{k+1}\} \leq \frac{\ep}{2^{k+1}}, \quad {1\leq p, r \leq k},
%\|V_{m_p}e_{m_jm_p}S_{m_j}i_{km_j} - V_{m_p}e_{m_lm_p}S_{m_l}i_{km_l}\| \leq \frac{\ep}{2^{l}} \quad \text{if %$k, p < l < j$.}
\end{equation}
where the diameter is measured with respect to the operator norm. Define $R, S$ and $q \in L(E)$ by 
\begin{align*} 
Rx &= \sum^\infty_{s=1}J_{m_{2s}}R_{m_{2s}}P_{I_{m_{2s}}}x, \\
Sx  &= \sum^\infty_{r=1}J_{N_{m_{2r}}}S_{m_{2r}}i_{r,m_{2r}}P_rx - \sum^\infty_{r=2}J_{N_{m_{2r-1}}}S_{m_{2r-1}}i_{r,m_{2r-1}}P_rx,\\
qx &= \sum^\infty_{r=1}J_rq_{r,m_{2r}}P_{m_{2r}}x
\end{align*}
for any $x\in E$.  It is easy to check that $R, S ,q$ are indeed bounded linear operators on $E$.
In particular, $\|qx\| \leq C\sum\|P_{m_{2r}}x\| \leq C\|x\|$ and hence $\|q\| \leq C$. 
Since $\|J_I\|, \|P_I\|, \|R_m\|, \|i_{mn}\|\leq 1$ and $\|S_m\| \leq K$,
\begin{align}\label{eq4} 
\sum_{r < s}\|J_{m_{2s}}R_{m_{2s}}P_{I_{m_{2s}}}&TJ_{N_{m_{2r}}}S_{m_{2r}}i_{r,m_{2r}}P_r\| \\ \notag &\leq K\sum_{r < s}\|P_{I_{m_{2s}}}TJ_{N_{m_{2r}}}\| \\ \notag
&\leq K\sum_{r < s}\frac{\ep}{2^{m_{2r}+m_{2s}}} \text{ by (\ref{eq1.1})}\\ \notag
& \leq K\sum_{r < s}\frac{\ep}{2^{2r+2s}}.
\end{align}
Similarly,
\begin{equation}\label{eq5}
\sum_{2\leq r <s}\|J_{m_{2s}}R_{m_{2s}}P_{I_{m_{2s}}}TJ_{N_{m_{2r-1}}}S_{m_{2r-1}}i_{r,m_{2r-1}}P_r\| \leq K\sum_{2 \leq r <s}\frac{\ep}{2^{2s+2r-1}}
\end{equation}
If $r > s$, then $S_{m_{2r}}E_{m_{2r}} \subseteq E_{N_{m_{2r}}}$. 
Since $N_{m_{2r}} \subseteq N = \cup^\infty_{l = m_{2s}+1}N_l$, $J_NP_NJ_{N_{m_{2r}}} = J_{N_{m_{2r}}}$.
Thus,
\[ P_{I_{m_{2s}}}TJ_{N_{m_{2r}}}S_{m_{2r}} = (P_{I_{m_{2s}}}TJ_{N})(P_N J_{N_{m_{2r}}})S_{m_{2r}} = V_{m_{2s}}e_{m_{2r}m_{2s}}S_{m_{2r}}.\]
Similarly,
\[ P_{I_{m_{2s}}}TJ_{N_{m_{2r-1}}}S_{m_{2r-1}} = V_{m_{2s}}e_{m_{2r-1}m_{2s}}S_{m_{2r-1}}.\]
Therefore,
\begin{align}\label{eq6}
\sum_{r>s}\|J_{m_{2s}}&R_{m_{2s}}P_{I_{m_{2s}}}T (J_{N_{m_{2r}}}S_{m_{2r}}i_{r,m_{2r}}P_r - J_{N_{m_{2r-1}}}S_{m_{2r-1}}i_{r,m_{2r-1}}P_r)\|\\
\notag &\leq \sum_{r>s}\|J_{m_{2s}}\|\,\|R_{m_{2s}}\|\,\|V_{m_{2s}}e_{m_{2r}m_{2s}}S_{m_{2r}}i_{r,m_{2r}} -\\ &\notag \quad\quad\quad\quad\quad\quad\quad\quad\quad\quad   V_{m_{2s}}e_{m_{2r-1}m_{2s}}S_{m_{2r-1}}i_{r,m_{2r-1}}\|\,\|P_r\|\\
\notag &\leq   \sum^\infty_{s=1}\sum^\infty_{r=s+1}\diam\{V_{m_{2s}}e_{jm_{2s}}S_{j}i_{rj}:j\in M_{2r-1}\}\\
\notag &\leq \sum^\infty_{s=1}\sum^\infty_{r=s+1}\frac{\ep}{2^{2r-1}} \quad \text{by (\ref{eq2})}\\ &\leq \notag \sum^\infty_{s=1}\frac{\ep}{2^{2s+1}}.
\end{align}
Also,
\begin{align}\label{eq6.1}
\sum^\infty_{r=1}\|J_{m_{2r}}&R_{m_{2r}}P_{I_{m_{2r}}}TJ_{N_{m_{2r-1}}}S_{m_{2r-1}}i_{r,m_{2r-1}}P_r\|\\
\notag&\leq \sum^\infty_{r=1}K\|P_{I_{m_{2r}}}TJ_{N_{m_{2r-1}}}\|\leq K\sum^\infty_{r=1}\frac{\ep}{2^{4r-1}} \quad \text{by (\ref{eq1.1})}.
\end{align}
Furthermore,
\begin{align}\label{eq7}
q\sum^\infty_{r=1} J_{m_{2r}}&R_{m_{2r}}P_{I_{m_{2r}}}TJ_{N_{m_{2r}}}S_{m_{2r}}i_{r,m_{2r}}P_rx\\
\notag &= q\sum^\infty_{r=1}  J_{m_{2r}}R_{m_{2r}}W_{m_{2r}}S_{m_{2r}}i_{r,m_{2r}}P_rx\\
\notag &= q\sum^\infty_{r=1}  J_{m_{2r}}i_{r,m_{2r}}P_rx\\
\notag &= \sum^\infty_{r=1}  J_rq_{r,m_{2r}}i_{r,m_{2r}}P_rx = \sum^\infty_{r=1}  J_rP_rx = x
\end{align}
for all $x\in E$. Combining formulae (\ref{eq4}) to (\ref{eq7}), we find that
\begin{align*} 
\|qRTS - 1_E\| \leq \|q\|\bigl\{K\bigl(\sum_{r<s}\frac{\ep}{2^{2r+2s}} &+ \sum_{2\leq r<s}\frac{\ep}{2^{2r+2s-1}}+ \sum^\infty_{r=1}\frac{\ep}{2^{4r-1}} \bigr)\\ &+ \sum^\infty_{s=1}\frac{\ep}{2^{2s+1}}\bigr\} < 1
\end{align*}
by (\ref{eq1}), since $\|q\|\leq C$.
Hence $qRTS$ is invertible in $L(E)$. Therefore,
\[ 1_E = ((qRTS)^{-1}qR)TS\]
factors through $T$.
\end{proof}

Next, we apply the crucial dichotomy theorem from \cite{LOSZ}.  The precise version we will employ is derived as a corollary.  The closed unit ball of a Banach space $X$ is denoted by $B_X$.

\begin{thm} \label{thm4} \cite[Theorem 2.1]{LOSZ}
Let $U_m:X_m \to L^1$ be a uniformly bounded sequence of operators. Then then the following dichotomy holds:
\begin{enumerate}
\item either the identity operators $1_{\ell^1(k)}$ uniformly factor through the $U_m$, 
\item or, for any $\ep > 0$, there exists $M>0$ such that for all $m$, there exists $g_m \in L^1_+$ with $\|g_m\|_{L^1}\leq M$ and $U_mB_{X_m} \subseteq \{f\in L^1: |f| \leq g_m\} + \ep B_{L^1}$.
\end{enumerate}
\end{thm}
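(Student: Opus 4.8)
The plan is to read this statement as a uniform version of the Dunford--Pettis / Kadec--Pe\l czy\'nski dichotomy (relative weak compactness versus an $\ell^1$-structure), and to prove it by establishing the essential implication $\neg(2)\Rightarrow(1)$; for a dichotomy this is all that is required, and the mutual exclusivity of the two alternatives will follow a posteriori from the same norm count used below. After normalising so that $\sup_m\|U_m\|\le1$, negating $(2)$ produces an $\ep_0>0$ with the property that for every $M$ there is an index $m$ for which $U_mB_{X_m}\not\subseteq\{f\in L^1:|f|\le g\}+\ep_0B_{L^1}$ for \emph{every} $g\in L^1_+$ with $\|g\|_{L^1}\le M$. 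I interpret this as a uniform failure of almost order boundedness of the sets $U_mB_{X_m}$, which in $L^1$ is the same as a uniform failure of uniform integrability, and hence as the signal that an $\ell^1(k)$-system is hiding in some $U_mB_{X_m}$ for each prescribed $k$.

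The heart of the argument is to convert this failure, \emph{inside a single} $U_mB_{X_m}$, into an almost disjointly supported family. Fixing $k$, I would apply the failure of $(2)$ at level $M=k$ to obtain a witnessing index $m$, and then run an iterative peeling scheme: having chosen $x_1,\dots,x_j\in B_{X_m}$ and pairwise disjoint sets $A_1,\dots,A_j$, I set $g_j=\sum_{i\le j}|U_mx_i|$, note that $\|g_j\|_{L^1}\le j\le M=k$, and use $U_mB_{X_m}\not\subseteq\{|f|\le g_j\}+\ep_0B_{L^1}$ to extract a new $x_{j+1}$ whose excess $(|U_mx_{j+1}|-g_j)_+$ carries $L^1$-mass exceeding $\ep_0$. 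The crucial refinement is to arrange, for pairwise disjoint $A_i$, both the lower bounds $\int_{A_i}|U_mx_i|\ge c\,\ep_0$ and the \emph{almost-disjoint support} estimates $\sum_{i\ne j}\int_{A_i}|U_mx_j|<c\,\ep_0/2$ for every $j$, so that each image is essentially carried by its own $A_j$. This separation is exactly what the quantitative Kadec--Pe\l czy\'nski / subsequence-splitting mechanism supplies: mass that cannot be captured by a controlling function of bounded norm must escape to sets of small measure, on which a gliding-hump selection disjointifies it. The budget $\|g_j\|_{L^1}\le j$ caps the scheme at $M=k$ steps, which is precisely what forces the count up to $k$.

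With the $x_i\in B_{X_m}$ and the disjoint $A_i$ in hand, the factorisation is explicit. Define $S_k:\ell^1(k)\to X_m$ by $S_ke_i=x_i$, so $\|S_k\|\le1$, and the reading map $R_k:L^1\to\ell^1(k)$ by $R_kf=\bigl(\int_{A_i}\operatorname{sign}(U_mx_i)\,f\bigr)_{i=1}^k$; disjointness of the $A_i$ gives $\|R_kf\|_{\ell^1}=\sum_i\bigl|\int_{A_i}\operatorname{sign}(U_mx_i)f\bigr|\le\sum_i\int_{A_i}|f|\le\|f\|_{L^1}$, so $\|R_k\|\le1$ \emph{independently of} $k$, and it is this automatic uniform control that makes the scheme succeed. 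The matrix of $R_kU_mS_k$ has diagonal entries $\int_{A_i}|U_mx_i|\ge c\,\ep_0$ and columns whose off-diagonal $\ell^1$-mass $\sum_{i\ne j}\int_{A_i}|U_mx_j|$ was made $<c\,\ep_0/2$; hence it is strictly column-diagonally dominant on $\ell^1(k)$, invertible with inverse norm at most $2/(c\,\ep_0)$. Writing $1_{\ell^1(k)}=\bigl((R_kU_mS_k)^{-1}R_k\bigr)U_mS_k$ exhibits a factorisation of $1_{\ell^1(k)}$ through $U_m$ with $\|(R_kU_mS_k)^{-1}R_k\|\,\|S_k\|\le K$ for a constant $K=K(\ep_0)$ independent of $k$, which is exactly alternative $(1)$. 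The same disjointness estimate also shows the alternatives cannot both hold, since an $\ell^1(k)$-system with $\int_{A_i}|U_mx_i|\ge c\,\ep_0$ on disjoint $A_i$ needs any common dominating function to have norm $\ge k(c\,\ep_0-\ep_0/2)$, contradicting the fixed $M$ in $(2)$ for large $k$.

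The step I expect to be the main obstacle is the one flagged in the middle paragraph: producing, inside one fixed $U_mB_{X_m}$ and with constants independent of $k$, genuinely disjoint sets $A_i$ on which the selected image $U_mx_i$ concentrates while \emph{every} other image $U_mx_j$ deposits only negligible mass. The naive peeling guarantees merely that $|U_mx_{j+1}|$ exceeds $\sum_{i\le j}|U_mx_i|$ on its excess set, which does \emph{not} by itself keep the earlier images small there; closing this gap is precisely where one must invoke the quantitative control on where $L^1$-mass can hide, namely that escaping mass lives on small-measure sets amenable to gliding-hump disjointification. This finite, uniform almost-disjointification is the technical core of \cite[Theorem 2.1]{LOSZ}.
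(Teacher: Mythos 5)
First, a point of comparison: the paper contains no proof of this statement --- it is imported verbatim from \cite[Theorem 2.1]{LOSZ} --- so your argument has to stand entirely on its own. Its overall shape is the right one: prove $\neg(2)\Rightarrow(1)$, peel off vectors whose images carry $\ep_0$ of mass above the running majorant, disjointify the supports, and invert a diagonally dominant matrix; and the budget count $\|g_j\|_{L^1}\le j\le M$ correctly explains why large $M$ buys large $k$.

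The gap is the one you flag yourself, and the mechanism you invoke to close it does not close it. Taking the running majorant to be $g_j=C\bigl(1+\sum_{i\le j}|U_mx_i|\bigr)$ does control \emph{earlier} images on the \emph{later} excess set $A_{j+1}\subseteq\{|U_mx_{j+1}|>Cg_j\}$, since there $\sum_{i\le j}|U_mx_i|<|U_mx_{j+1}|/C$ pointwise and hence integrates to at most $1/C$; and it forces $\mu(A_{j+1})\le 1/C$. But nothing controls $\int_{A_i}|U_mx_l|$ for $l>i$: the set $A_i$ is fixed before $x_l$ is chosen, and the family $\{U_mx:x\in B_{X_m}\}$ is, by hypothesis, precisely \emph{not} uniformly integrable, so smallness of $\mu(A_i)$ gives no smallness of $\int_{A_i}|U_mx_l|$, and a gliding hump cannot be run forward in time. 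Consequently your matrix $\bigl(\int_{A_i}\operatorname{sign}(U_mx_i)\,U_mx_l\bigr)$ has no reason to be column-diagonally dominant and $R_kU_mS_k$ has no reason to be invertible. The missing ingredient is an over-selection step: extract $N\sim k^2/\ep_0$ peaked vectors rather than $k$; observe that each column of $a_{il}=\int_{A_i}|U_mx_l|$ has $\ell^1$-norm at most $\sup_m\|U_m\|\le 1$ because the $A_i$ are disjoint, so at most $2k/\ep_0$ entries per column exceed $\ep_0/(2k)$; and then run a counting (Tur\'an-type) or random-selection argument to find a $k$-element subset on which every off-diagonal entry is below $\ep_0/(2k)$. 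That combinatorial/probabilistic selection is the actual core of \cite[Theorem 2.1]{LOSZ}, and without it the factorisation step does not go through. (Your closing ``mutual exclusivity'' remark is inessential: the theorem is only ever used in the direction $\neg(2)\Rightarrow(1)$.)
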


\begin{cor}\label{cor5}
Let $X$ be a Banach space and let $U_m:X \to L^1$ be a uniformly bounded sequence of finite rank operators. Then
\begin{enumerate}
\item either the identity operators $1_{\ell^1(k)}$ uniformly factor through the $U_m$,
\item or, for any quotient map $\xi: \ell^1\to X$, $\sup_m\Fac^\ep_0(U_m\xi) < \infty$ for all $\ep > 0$.
\end{enumerate}
\end{cor}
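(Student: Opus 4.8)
The plan is to apply Theorem \ref{thm4} directly to the sequence $(U_m)$, which is legitimate since here all the domains coincide with the single space $X$. The dichotomy then splits into two cases. In the first case the identities $1_{\ell^1(k)}$ uniformly factor through the $U_m$, which is precisely alternative (a) of the corollary, so nothing more is needed. Hence the entire content is to show that the second alternative of Theorem \ref{thm4} forces alternative (b): assuming that for a given $\ep > 0$ there is an $M$ with $U_mB_X \subseteq \{f : |f| \leq g_m\} + \ep B_{L^1}$ and $\|g_m\|_{L^1} \leq M$ for all $m$, I must bound $\Fac^{\ep'}_0(U_m\xi)$ uniformly in $m$ for any prescribed $\ep' > 0$. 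Since $\xi$ is a quotient map, $\xi B_{\ell^1} \subseteq B_X$, so the order-interval containment is inherited by $T_m := U_m\xi$, and each $T_m$ has finite rank because $U_m$ does.

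The key point is that the lattice (order-interval) structure wants to route $T_m$ through $\ell^\infty$, while finite rank lets me cut this down to a finite-dimensional $\ell^\infty(k) \subseteq c_0$. Fix $m$ and set $F = \rg T_m$, a finite-dimensional subspace of $L^1$. Given $\delta > 0$, I approximate a basis of $F$ by simple functions (which are dense in $L^1$) and let $\cA$ be the finite algebra they generate; then the conditional expectation $\E_\cA$ onto $L^1(\cA) \cong \ell^1(k)$ is a positive norm-one projection with $\|\E_\cA f - f\|_{L^1} \leq \delta\|f\|$ for all $f \in F$, whence $\|\E_\cA T_m - T_m\| \leq \delta\|T_m\|$. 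Being a positive contraction, $\E_\cA$ preserves order intervals, so
\[ \E_\cA T_m B_{\ell^1} \subseteq [-\E_\cA g_m,\, \E_\cA g_m] + \ep B_{L^1}, \qquad \|\E_\cA g_m\|_{L^1} \leq M. \]
In $L^1(\cA) \cong \ell^1(k)$ this order interval is exactly the image of $B_{\ell^\infty(k)}$ under the diagonal operator $D$ given by multiplication by the positive step function $\E_\cA g_m$, and one checks $\|D : \ell^\infty(k) \to L^1\| \leq \|\E_\cA g_m\|_{L^1} \leq M$. Lifting $\E_\cA T_m$ coordinatewise through this order interval produces $B : \ell^1 \to \ell^\infty(k)$ with $\|B\| \leq 1$ and $\|\E_\cA T_m - DB\| \leq \ep$, so that $\|T_m - DB\| \leq \ep + \delta\|T_m\|$.

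Finally I transfer the factorization through $\ell^\infty(k)$ to one through $c_0$: the span of the first $k$ coordinate vectors of $c_0$ is isometric to $\ell^\infty(k)$ and is the range of the norm-one truncation projection, so writing $\iota : \ell^\infty(k) \to c_0$ for this isometric embedding and $\pi$ for the projection, the operators $\iota B : \ell^1 \to c_0$ and $D\pi : c_0 \to L^1$ compose to $DB$ with $\|\iota B\| \leq 1$ and $\|D\pi\| \leq M$. Thus $\Fac^{\,\ep + \delta\|T_m\|}_0(T_m) \leq M$. Choosing $\ep$ and $\delta$ at the outset so that $\ep + \delta\sup_m\|T_m\| \leq \ep'$ (possible since the $U_m$ are uniformly bounded and $\|\xi\| < \infty$), I obtain $\sup_m \Fac^{\ep'}_0(U_m\xi) \leq M < \infty$, as required. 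The delicate point, and the one deserving care, is the interaction of the two structures in the middle step: one must select the discretizing algebra $\cA$ from the finite-dimensional range of $T_m$ (so that $\E_\cA T_m \approx T_m$) while keeping $\E_\cA$ a positive contraction (so that the order interval, and with it the bound $M$, survives discretization). The uniformity of $M$ in $m$ comes entirely from Theorem \ref{thm4} and is unaffected by the $m$-dependent choice of $k$ and $\cA$.
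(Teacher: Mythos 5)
Your proposal is correct and follows essentially the same route as the paper: apply Theorem \ref{thm4}, and in the second case use finite rank to replace $U_m\xi$ by its conditional expectation onto a finite subalgebra (the paper uses the dyadic filtration, you use an algebra adapted to the range -- an immaterial difference), then exploit positivity of the conditional expectation to keep the order-interval bound and factor the discretized order interval through $\ell^\infty(k)\subseteq c_0$ via a multiplication operator of norm at most $M$.
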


\begin{proof}
Suppose that condition (b) of Theorem \ref{thm4} holds and let $\xi: \ell^1 \to X$ be a quotient map. We may assume that $\|\xi\| \leq 1$. Denote the unit vector basis of $\ell^1$ by $(e_k)$.
Given $\ep >0$, let $M$ and $g_m$ be as in (b) of Theorem \ref{thm4} for the constant $\ep/2$ in place of $\ep$.
For each $r$, let $\E_r$ denote the conditional expectation operator with respect to the $\sigma$-algebra generated by the dyadic intervals $([\frac{j-1}{2^r},\frac{j}{2^r}))^{2^r}_{j=1}$. $\E_r$ is a positive operator on $L^1$ of norm $1$ and $\lim\E_r = 1_{L^1}$ in the strong operator topology. Since $U_m$ has finite rank, there exists $r_m$ such that $\|U_m - \E_{r_m}U_m\| \leq \ep/2$.
For all $m$ and $k$, there exists $f_{mk} \in L^1$, $|f_{mk}| \leq g_m$ and $\|U_m\xi e_k - f_{mk}\|_{L^1} \leq \ep/2$.
Then $|\E_{r_m}f_{mk}| \leq \E_{r_m} g_m$ and 
\[ \|U_m\xi e_k - \E_{r_m} f_{mk}\| \leq \|U_m\xi e_k - \E_{r_m} U_m\xi e_k\|+  \|\E_{r_m} U_m\xi e_k - \E_{r_m} f_{mk}\| \leq \ep. \]
Take $a_{mk} = (a_{mk}(j))^{2^{r_m}}_{j=1} \in \ell^\infty(2^{r_m})$, $|a_{mk}| \leq 1$, such that 
\[\E_{r_m}f_{mk} = \sum^{2^{r_m}}_{j=1}a_{mk}(j)\chi_{[\frac{j-1}{2^{r_m}},\frac{j}{2^{r_m}})}\cdot \E_{r_m}g_m.\]
Define $S_m: \ell^1\to c_0$ by $S_me_k = (a_{mk}(1),\dots,a_{mk}(2^{r_m}),0,\dots)$, and $R_m: c_0\to L^1$ by $R_m((a_j)) = \sum^{2^{r_m}}_{j=1}a_j\chi_{[\frac{j-1}{2^{r_m}},\frac{j}{2^{r_m}})}\cdot \E_{r_m}g_m$. Then $\|S_m\| \leq 1$, $\|R_m\| \leq M$ and $\|U_m\xi - R_mS_m\| \leq \ep$. This shows that $\sup_m\Fac^\ep_{0}(U_m\xi) \leq M$.
\end{proof}

We are now ready to prove the main result of the paper.

\begin{thm}\label{thm7}
Let $E = (\oplus \ell^\infty(n))_{\ell^1}$.  If $T$ is a bounded linear operator on $E$ such that $T \notin \G^i_{\ell^1}(E)$, then $1_E$ factors through $T$.  Consequently, $\G^i_{\ell^1}(E)$ is the unique maximal ideal of $L(E)$.
\end{thm}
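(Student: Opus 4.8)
The plan is to derive Theorem~\ref{thm7} by combining the machinery developed above: Proposition~\ref{prop3} produces a ``spread out'' witness to $T\notin\G^i_{\ell^1}(E)$, Corollary~\ref{cor5} converts the failure of factorization-through-$\ell^1$ into a statement about factorization of identities, and Theorem~\ref{thm 3.1} upgrades uniform factorization of the $1_{E_n}$ into a factorization of $1_E$. Since $E=(\oplus\ell^\infty(n))_{\ell^1}$ manifestly satisfies condition $(*)$ (the spaces $\ell^\infty(n)$ have unbounded dimension and the coordinate inclusions/projections $i_{mn},q_{mn}$ have norm $1$, so $C=1$), all the hypotheses needed to run these three engines are available, and the last assertion about uniqueness of the maximal ideal will follow formally from the first.

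First I would take $T\in L(E)\setminus\G^i_{\ell^1}(E)$ and apply Proposition~\ref{prop3} to obtain $\ep>0$, indices $n_1<n_2<\cdots$, and finite blocks $I_1<I_2<\cdots$ so that, with $W_j=P_{I_j}TJ_{n_j}$, we have $\sup_j\Fac^\ep_1(\theta J_{I_j}W_j)=\infty$. The key observation is that $\theta J_{I_j}W_j$ is a finite-rank operator from the finite-dimensional space $E_{n_j}=\ell^\infty(n_j)$ into $\ell^\infty$, and by passing through the fixed embedding $\theta=Q'$ (whose target is identified with $\ell^\infty\cong L^\infty(\{0,1\}^{\N})$-type structure, or more directly, via the duality used in \cite{LOSZ}) I can arrange these to be viewed as operators into $L^1$ so that Corollary~\ref{cor5} applies to the sequence $(\theta J_{I_j}W_j)_j$. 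Because the second alternative (b) of Corollary~\ref{cor5} would give $\sup_j\Fac^\ep_0(\cdot)<\infty$, and since $c_0$ embeds into $\ell^\infty$ one can cheaply upgrade $\Fac^\ep_0$ control to $\Fac^\ep_1$ control --- contradicting $\sup_j\Fac^\ep_1=\infty$ --- alternative (b) must fail. Hence alternative (a) holds: the identity operators $1_{\ell^1(k)}$ uniformly factor through the $W_j$ (after transporting through $\theta$, which is an isomorphism onto its range). Translating back through the identification $\ell^1(k)\leftrightarrow\ell^\infty(k)$ relevant to $E_{n_j}$ and using $(*)$, this yields that $1_{E_n}$ factor through $W_j$ uniformly.

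Having secured uniform factorization of $1_{E_n}$ through the $W_j$, I would invoke Theorem~\ref{thm 3.1} directly (with $N_j=\{n_j\}$, so that $J_{N_j}=J_{n_j}$ and $W_j=P_{I_j}TJ_{N_j}$ as required) to conclude that $1_E$ factors through $T$. This establishes the main claim: every operator outside $\G^i_{\ell^1}(E)$ factors the identity, hence generates all of $L(E)$ as a two-sided ideal. For the final sentence, note that $\G^i_{\ell^1}(E)$ is a proper closed two-sided ideal by the Proposition (since $E$ contains the uniformly isomorphic copies $\ell^\infty(k)=E_k$). If $\I$ is any proper ideal and $T\in\I\setminus\G^i_{\ell^1}(E)$ existed, then $1_E$ would factor through $T\in\I$, forcing $\I=L(E)$, a contradiction; thus every proper ideal is contained in $\G^i_{\ell^1}(E)$, making it the unique maximal ideal.

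The main obstacle I anticipate is the bridge between the $L^1$-oriented dichotomy (Theorem~\ref{thm4}/Corollary~\ref{cor5}) and the $\ell^\infty$-valued, $\Fac^\ep_1$-indexed data coming out of Proposition~\ref{prop3}. Concretely, the subtle point is identifying the correct sense in which the finite-rank maps $\theta J_{I_j}W_j:\ell^\infty(n_j)\to\ell^\infty$ (or rather their adjoints/preduals) fit the template ``$U_m:X\to L^1$'' of Corollary~\ref{cor5}, and then verifying that the $\Fac^\ep_0$ bound supplied by alternative (b) genuinely contradicts the $\sup_j\Fac^\ep_1=\infty$ conclusion of Proposition~\ref{prop3}. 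This requires care in passing between $c_0$- and $\ell^1$-factorization norms and in exploiting that $\theta$ is $Q'$ for a surjection $Q:\ell^1\to(\oplus E_n')_{c_0}$, so that the predual structure lines up; getting this duality alignment exactly right --- rather than merely morally right --- is where the real content of the argument lies, with the two preceding factorization theorems doing the remaining bookkeeping automatically.
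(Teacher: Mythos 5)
Your overall architecture is exactly that of the paper: Proposition~\ref{prop3} to spread out the failure of $\ell^1$-factorization, Corollary~\ref{cor5} to convert it into uniform factorization of identities through the blocks $W_j$, and Theorem~\ref{thm 3.1} (with $N_j=\{n_j\}$ and condition $(*)$ holding with $C=1$) to assemble $1_E$; your deduction of uniqueness of the maximal ideal at the end is also the paper's. However, the step you yourself flag as the crux --- the bridge between the $\ell^\infty$-valued data $\theta J_{I_j}W_j$ and the $L^1$-valued template of Corollary~\ref{cor5} --- is left unexecuted, and the one concrete mechanism you offer for part of it is not correct. You cannot apply Corollary~\ref{cor5} to the operators $\theta J_{I_j}W_j$ themselves: they map into $\ell^\infty$, not $L^1$, and they do not share a common domain $X$. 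The paper instead passes to preduals: writing $F=(\oplus\ell^1(n))_{c_0}$, so that $F'=E$ and $\theta=Q'$ for the fixed quotient $Q:\ell^1\to F$, one has $\theta J_{I_j}W_j=(V_jQ)'$ with $V_j=W_j'\pi_{I_j}:F\to\ell^1(n_j)$, and the operators fed to Corollary~\ref{cor5} are $U_j=\al_jV_j:F\to L^1$, where $\al_j:\ell^1(n_j)\to L^1$ are isometric embeddings with norm-one left inverses $\beta_j$, the quotient map being $\xi=Q$.

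Your claim that ``since $c_0$ embeds into $\ell^\infty$ one can cheaply upgrade $\Fac^\ep_0$ control to $\Fac^\ep_1$ control'' is not the right reason and does not literally parse: the $\Fac^\ep_0$ bound of alternative (b) concerns $U_jQ$ (hence $V_jQ=\beta_jU_jQ$), whereas the $\Fac^\ep_1$ blow-up from Proposition~\ref{prop3} concerns the \emph{adjoints} $(V_jQ)'=\theta J_{I_j}W_j$, which are different operators. The correct mechanism is duality: an approximate factorization $V_jQ\approx SR$ through $c_0$ dualizes to $(V_jQ)'\approx R'S'$ through $(c_0)'=\ell^1$ with the same norm control, so $\Fac^\ep_1((V_jQ)')\leq\Fac^\ep_0(V_jQ)$ and hence $\sup_j\Fac^\ep_0(U_jQ)\geq\sup_j\Fac^\ep_0(V_jQ)=\infty$, ruling out alternative (b). Likewise, alternative (a) gives $1_{\ell^1(k)}$ factoring uniformly through the $U_j$, and it is again by taking adjoints (using $U_j'=J_{I_j}W_j\al_j'$ together with the one-sided inverses $P_{I_j}$ and $\beta_j'$) that one gets $1_{\ell^\infty(k)}=1_{E_k}$ factoring uniformly through the $W_j$ --- not, as you write, $1_{\ell^1(k)}$ factoring through the $W_j$ and then ``translating.'' These duality manipulations are precisely where the contradiction with Proposition~\ref{prop3} and the hypothesis of Theorem~\ref{thm 3.1} are actually established, so as written the proposal has a genuine gap at its central step.
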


\begin{proof}
It has been observed that $\G^i_{\ell^1}(E)$ is a proper closed ideal of $L(E)$.  Hence the second assertion of the theorem follows from the first. Let $E_n = \ell^\infty(n)$ and $F_n = \ell^1(n)$.  
Set $F = (\oplus F_n)_{c_0}$ and $F_I = (\oplus_{n\in I} F_n)_{c_0}$ for any subset $I$ of $\N$.  Denote by $\pi_I$ the natural projection from $F$ onto $F_I$. Then $F'$ and $F_I'$ may be naturally identified with $E$ and $E_I$ respectively. 
With this identification, $\pi_I' = J_I$.
Recall that we have fixed a quotient map $Q:\ell^1 \to F$ and let $\theta = Q'$.
By Proposition \ref{prop3}, there exist $\ep > 0$, $n_1 < n_2 < \cdots$ and  $I_1 < I_2 < \cdots \subseteq \N$, such that, setting $W_j = P_{I_j}TJ_{n_j}: E_{n_j} \to E_{I_j}$, we have
$\sup_j\Fac^\ep_1(\theta J_{I_j}W_j) = \infty$.  
Since $I_j$ is a finite set, $W_j' \in L(F_{I_j},F_{n_j})$ and $W_j'' = W_j$. Let $V_j = W_j'\pi_{I_j}: F \to F_{n_j}$.
Then $\sup_j\Fac^\ep_1((V_jQ)') = \infty$ and hence $\sup_j\Fac^\ep_0(V_jQ) = \infty$.
Fix bounded linear operators $\al_j: F_{n_j}\to L^1$, $\beta_j:L^1\to F_{n_j}$ such that $\|\al_j\|, \|\beta_j\| = 1$ and $\beta_j\al_j = 1_{F_{n_j}}$.  Let $U_j = \al_jV_j: F \to L^1$.
Since $V_jQ = \beta_jU_jQ$ and $\|\beta_j\| = 1$, we must have $\sup_j\Fac^\ep_0(U_jQ) \geq \sup_j\Fac^\ep_0(V_jQ) = \infty$.
By Corollary \ref{cor5}, $1_{\ell^1(k)}$ uniformly factor through the $U_j$. Thus $1_{\ell^\infty(k)}$ uniformly factor through the $U_j' = J_{I_j}W_j\al_j'$ and hence through the $W_j$.  Therefore, $1_E$ factors through $T$ by Theorem \ref{thm 3.1}.
\end{proof}

\noindent {\bf Remark.} For a sequence of operators $U_m: (\oplus^m_{i=1}H_i)_{\ell^\infty(m)} \to K$, where $H_i$ and $K$ are Hilbert spaces, \cite[Lemma 5.3]{LLR} leads to a dichotomy theorem analogous to Corollary \ref{cor5}.  As a result, one may adapt the foregoing arguments to prove that for $E = (\oplus \ell^2(n))_{\ell^1}$, $\ol{\G_{\ell^1}(E))}$ is the unique maximal ideal of $L(E)$.  The main result of \cite{LSZ} follows easily.


\begin{thebibliography}{99}

\bibitem{AH}\textsc {Spriros A.\ Argyros and Richard G.\ Haydon}, A hereditarily indecomposable ${\mathcal L}_\infty$ -space that solves the scalar-plus-compact problem. Acta Math.\ {\bf 206}(2011), no. 1, 1–54.

\bibitem{C}\textsc{J.W.\ Calkin}, Two-sided ideals and congruences in the ring of bounded operators in Hilbert space, Ann.\ Math.\ {\bf 42}(1941), 839-873.

\bibitem{GMF}\textsc{I.C.\ Gohberg, A.S.\ Markus, I.A.\ Feldman}, Normally solvable operators and ideals associated with them, AMS Translation {\bf 61}(1967), 63-84.

\bibitem{G}\textsc{B.\ Gramsch}, Eine Idealstruktur Banachscher Operatoralbebren, J. Reine Angew.\ Math.\ {\bf 225}(1967), 97-115.




\bibitem{KPSTT}\textsc{Anna Kami\'{n}ska, Alexey I.\ Popov, Eugeniu Spinu, Adi Tcaciuc, Vladimir G.\ Troitsky,} Norm closed operator ideals in Lorentz sequence spaces, J.\ Math.\ Anal.\ Appl.\ {\bf 389} (2012), 247–260.

\bibitem{KL}\textsc{Tomasz Kania, Niels Jakob Laustsen}, Uniqueness of the maximal ideal of the Banach algebra of bounded operators on $C([0,\omega_1])$, J.\ Funct.\ Anal.\ {\bf 262} (2012), 4831–4850. 

\bibitem {LLR}\textsc{N.J.\ Laustsen, R.J.\ Loy and C.J.\ Read}, The lattice of closed ideals in the Banach algebra of operators on certain Banach space, J.\ Funct.\ Anal.\ {\bf 214} (2004), 106-113.

\bibitem {LOSZ}\textsc{N.J.\ Laustsen, E.\ Odell, Th.\ Schlumprecht and A.\ Zs\'{a}k}, Dichotomy theorems for random matrices and closed ideals of operators on $(\oplus^\infty_{n=1}\ell^n_1)_{c_0}$, J.\ Lond.\ Math.\ Soc.\  {\bf 86}(2012), no. 1, 235-258.

\bibitem {LSZ}\textsc{N.J.\ Laustsen, Th.\ Schlumprecht and A.\ Zs\'{a}k}, The lattice of closed ideals in the Banach algebra of operators on a certain dual Banach space, J.\ Operator Theory {\bf 56} (2006), 391-402.

\bibitem {LinSZ}\textsc{Peikee Lin, B\"{u}nyamin Sari, Bentuo Zheng},
Norm closed ideals in the algebra of bounded linear operators on Orlicz sequence spaces, Proc.\ Amer.\ Math.\ Soc., to appear.

\bibitem{L}\textsc{E.\ Luft}, The two-sided closed ideals of the algebra of bounded linear operators of a Hilbert space, Czechoslovak Math.\ J.\ {\bf 18}(1968), 595-605.

\bibitem {SST}\textsc{B.\ Sari, Th.\ Schlumprecht, N.\ Tomczak-Jaegermann, V.G.\ Troitsky}, On norm closed ideals in $L(\ell_p \oplus \ell_q )$, Studia Math. 179 (2007) 239–262.

\bibitem {S}\textsc{Th.\ Schlumprecht}, On the closed subideals of $L(\ell_p \oplus \ell_q )$, Oper.\ Matrices {\bf 6} (2012), 311–326.







\end{thebibliography}
\end{document}